      \theoremstyle{plain}
      \newtheorem{theorem}{Theorem}[section]
      \newtheorem*{theorem*}{Theorem}
      \newtheorem{corollary}[theorem]{Corollary}
      \newtheorem{proposition}[theorem]{Proposition}
      \theoremstyle{definition}
	  \newtheorem{example}[theorem]{Example}
      \newtheorem{definition}[theorem]{Definition}
     \theoremstyle{remark}
      \newtheorem{remark}[theorem]{Remark}
 \newcommand\RR{{\mathbb{R}}}
 \def\I{{\mathcal I}}
 \def\K{{\mathcal K}}
 \def\N{{\mathscr N}}
 \def\R{{\mathcal R}}
 \def\U{{\mathcal U}}
 \def\V{{\mathcal V}}
 \def\X{{\mathcal X}}
\newcommand\gc{{\hspace{2.4 pt} \searrow\hspace{-8 pt}^{\gamma} \hspace{5 pt}}}
\newcommand\se{{\hspace{2 pt}\diagup\hspace{-4.8 pt} \searrow\hspace{5 pt}}}
\newcommand\co{{\hspace{2 pt}\searrow \hspace{3 pt}}}
\newcommand\sco{{\searrow \hspace{-6 pt}\searrow\hspace{3 pt}}}
      \def\@setcopyright{}
      \def\serieslogo@{}
\begin{document}

\title {The cylinder of a relation and generalized versions of the Nerve Theorem}
   \author{Ximena Fern\'andez}
   \author{El\'ias Gabriel Minian}
   \address{Departamento  de Matem\'atica - IMAS\\
 FCEyN, Universidad de Buenos Aires. Buenos Aires, Argentina.}
\email{xfernand@dm.uba.ar ; gminian@dm.uba.ar}

   \begin{abstract}
   We introduce the notion of cylinder of a relation in the context of posets, extending the  construction of the mapping cylinder. We establish a local-to-global result for relations, generalizing Quillen's Theorem A for order preserving maps, and derive novel formulations of the classical Nerve Theorem for posets and simplicial complexes, suitable for covers with not necessarily contractible intersections.
     \end{abstract}

\subjclass[2010]{55U10, 55P10, 06A07,  18B35, 57Q10.}

\keywords{Quillen's Theorem A; finite topological spaces; posets; relations; nerve.}

   \maketitle

   \section{Introduction}
The mapping cylinder $M(f)$ of a map $f:X\to Y$ between topological spaces is a tool that allows one to replace, up to homotopy, any continuous function $f$ by a nice inclusion, namely the cofibration $i:X\hookrightarrow M(f)$. The analogue of this construction in the context of posets, called the non-Hausdorff mapping cylinder, was introduced in \cite{BM08b} to study simple homotopy theory of polyhedra using the topology and combinatorics of finite topological spaces. The (non-Hausdorff) mapping cylinder $B(f)$ of an order preserving map $f:X\to Y$ between posets has similar properties as its classical analogue: $B(f)$ deformation retracts onto $Y$ (when they are viewed as finite topological spaces) and the inclusion $i:X\hookrightarrow B(f)$ satisfies nice properties. In \cite{Bar11b} Barmak used the poset version of the mapping cylinder to give an alternative and simple proof of Quillen's Theorem A for posets and to derive stronger formulations of the Nerve Theorem and other related results. 

 An order preserving map $f:X\to Y$ is a particular case of a relation $\R\subseteq X\times Y$ (a subset of the Cartesian product of the underlying sets) and it is natural to ask whether there exists a general construction of a cylinder of a relation that coincides with $B(f)$ in the case that the relation is the one induced by a poset map $f$. In this article we define the cylinder $B(\R)$ of any relation $\R\subseteq X\times Y$ and study its homotopical properties. Of course one cannot expect that $B(\R)$ preserves the homotopy type of $Y$ for a general relation $\R$ but we will show that this is the case when the relation satisfies good local properties (see Proposition \ref{colapsa Y}). Analogously, under symmetric local hypotheses one can show that $B(\R)$ preserves the homotopy type of $X$ (Proposition \ref{colapsa X}). As an immediate consequence we obtain a generalization of Quillen's Theorem A for relations (Theorem \ref{cilindro}). When the relation is the one induced by an order preserving map $f:X\to Y$, Theorem \ref{cilindro} is precisely  Quillen's Theorem A for posets.
  
The Nerve Theorem, due to Borsuk \cite{Bor48}, with alternative versions by 
Leray \cite{Ler45}, Weil \cite{Wei52} and  McCord \cite{McC67}, and more recently Bj\"orner \cite{Bjo03} and Barmak \cite{Bar11b}, is a powerful tool widely used in Topology, Combinatorics, and, in the last years, in Computational and Applied Topology (see \cite{Car09,dSG07,DMW17,NLC11}). The theorem essentially affirms that under certain hypotheses on a given space $X$, the nerve of a cover has the same homotopy type as $X$, provided that the cover is \textit{good}. Which means that
the intersection of any subfamily of the cover is either empty
or contractible. However the requirement of being a good cover is somewhat restrictive and not always convenient. 
In Section  \ref{section nerve} we prove generalizations of this theorem for not necessarily good covers. This is accomplished by using a formulation of the classical theorem in terms of posets (Theorem \ref{nerve poset}). The generalized versions of the Nerve Theorem are deduced by applying Theorem \ref{cilindro} to suitable relations.

Suppose that $K$ is a simplicial complex and $\U=\{L_i\}_{i\in I}$ is a cover of $K$ by subcomplexes, such that the intersections are not necessarily connected but the connected components of the non-empty intersections are contractible (this kind of situation appears for example in the study of point cloud data \cite{Car09,dSG07,SMC07}). In this case the usual nerve $\N(\U)$ is not in general homotopy equivalent to $K$. In Section \ref{section nerve} we introduce the completion of the nerve $\hat{\N}(\U)$, which is a regular CW-complex (not necessarily a simplicial complex) built from the connected components of the non-empty intersections. Corollary \ref{coro completion} shows that the completion has the same homotopy type as $K$.

\section{The cylinder of a relation}

Given a finite simplicial complex $K$ (or more generally, a finite regular CW-complex), its face poset will be denoted by $\X(K)$. This is the poset of simplices (or cells) of $K$ ordered by inclusion. Conversely, given a finite poset $X$, $\K(X)$ denotes its classifying space (also called the order complex). This is the simplicial complex whose simplices are the non-empty chains of $X$. A finite poset can be seen as a finite topological space whose open subsets are the downsets (see \cite{Bar11a,BM08b,McC66,Sto66}). Its topology is related to the topology of the order complex $\K(X)$ by means of the  \textit{McCord map} $\mu:\K(X)\to X$ which is a  weak (homotopy) equivalence  (i.e. a continuous map which induces isomorphisms in all homotopy groups) \cite{McC66}. There exists also a weak equivalence  $\tilde\nu:K\to \X(K)$ associated to the functor $\X$. Although in this article we will not adopt explicitly the finite space point of view,  we will use reduction methods of finite spaces to study homotopy properties of posets and complexes. We recall below the reduction methods that we will use and fix some notations. For more details the reader may consult \cite{Bar11a}.

Given a finite poset $X$ and $x \in X$,  we denote by
$U_x\subseteq X$ the subposet of elements which are smaller than or equal to $x$ (this is the minimal open subset containing $x$, if we view $X$ as a finite space). Analogously, we denote by
$F_x$  the subposet of elements of $X$ which are greater than or equal to $x$ (this corresponds to the closure of $\{x\}$ in the finite space $X$). When a point $x$ belongs to different posets $X,Y$, sometimes we write $U_x^X,U_x^Y$ (resp. $F_x^X,F_x^Y$) to distinguish whether the subposets are considered in $X$ or in $Y$.

Note that a poset $X$ is homotopically trivial (which means that all its homotopy groups are trivial) 
if and only if $\K(X)$ is a contractible polyhedron. A finite poset is called a \it finite model \rm of a CW-complex $K$ 
 if its order complex $\K(X)$ is homotopy equivalent to $K$. The first reduction method for finite spaces was introduced by Stong \cite{Sto66}. 
Given a finite poset $X$, an element $x\in X$ is called an
\it up beat point \rm
if $\hat F_x = F_x\smallsetminus\{x\}$ has a minimum, i.e. there is a unique element
$y\in X$ which covers $x$. Similarly $x$ is called a
\it down beat point \rm if $\hat U_x = U_x \smallsetminus\{x\}$ has maximum 
(there is a unique $y$ covered by $x$). If $x$ is a beat point (up or down), $X\smallsetminus\{x\}\subseteq X$ is a strong deformation retract. This reduction (i.e. the removing of the point $x$) is denoted by $X\sco X\smallsetminus\{x\}$.
Stong proved that a finite poset $X$ is 
contractible if and only if one can remove beat points from $X$, one by one,
to obtain a poset with only one element  (this is also called \textit{dismantlable}, see for example \cite{DR81}). If $X$ is contractible then it
is homotopically trivial, but the converse does not hold 
(see \cite[Section 1.3]{Bar11a} for more details). Note that the classical Whitehead's Theorem is not valid in the context of finite spaces (there are finite spaces which are weak equivalent but not homotopy equivalent).

The notion of \it collapse \rm in the context of posets was 
introduced by Barmak and Minian in \cite{BM08b} and it corresponds
to Whitehead's notion of simplicial collapse \cite{Coh70, Whi39}.  Recall that a simplex $\sigma$ of a simplicial complex $K$ is a \textit{free face} of $K$ if there is a unique simplex $\tau \in K$ containing $\sigma$ properly. In that case, there is a collapse from $K$ to the subcomplex $L=K \setminus \{\sigma, \tau\}$ (this is denoted by $K\co L$), and two simplicial complexes $K,T$ are simple homotopy equivalent if one can obtain one from the other by performing collapses and expansions (we denote $K \se T$ following the standard notation). If $K\se T$, they are in particular homotopy equivalent.
A point $x$ of a finite poset $X$ is a 
\it down weak point \rm if $\hat U_x$ is contractible, and it is 
an \it up weak point \rm if  $\hat F_x$ is contractible. An \textit{elementary collapse} is the deletion of a weak point. The inverse operation is called an \textit{elementary
 expansion}.  We say that $X$ \textit{collapses} to $Y$ (or
$Y$ \textit{expands} to $X$), and write $X \co Y$,
 if there is a sequence of elementary collapses which
starts in $X$ and ends in $Y$. A poset $X$ is said to be
\it collapsible \rm if it collapses to
 a point. Note that any beat point is in particular a weak point (since posets with maximum or minimum are contractible). Therefore 
 if $X$ is a contractible poset then it is collapsible. 
 In \cite[Sections 4.2 and 4.3]{Bar11a} there are 
 various examples of collapsible posets which are not contractible. 
 Finally, we say that $X$ is \textit{simply equivalent} to $Y$ if there
 exists a sequence of collapses
and expansions that starts in $X$ and ends in $Y$. This is denoted by 
 $X\se Y$. In \cite{BM08b} it is proved that $X\se Y$ 
 if and only if the associated order complexes are simple homotopy equivalent. 
 Moreover, if $X\co Y$ then $\K(X)$ simplicially collapses to $\K(Y)$.  Conversely, $K\se L$ if and only if $\X(K) \se \X(L)$. 
 It can be also shown that if 
 $\hat U_x$ or  $\hat F_x$ is a collapsible poset then 
 $\K(X)\co \K(X\smallsetminus\{x\})$ (see \cite[Remark 4.3.1]{BM08b}).

Finally, an element $x\in X$ is called a \textit{$\gamma$-point} if  $\hat U_x$ or $\hat F_x$ is homotopically trivial. In \cite{Bar11a} it is proved that, in this case, the inclusion induces a simple homotopy equivalence $\K(X)\se \K(X\smallsetminus\{x\})$. This reduction in denoted by 
 $X \gc X\smallsetminus \{x\}$.

The relationship between the homotopy theory and simple homotopy theory of posets and polyhedra is summarized in the following theorem. The proof can be found in \cite{Bar11a,BM08b}. We will use this relationship throughout the paper to derive results on complexes using posets and vice-versa.

\begin{theorem}\label{relationship}
Let $X$ and $Y$ be finite posets and $K$ and $L$ be finite simplicial complexes (or more generally, finite regular CW-complexes).
\begin{enumerate}
\item $X$ and $Y$ are weak equivalent if and only if $\K(X)$ and $\K(Y)$ are homotopy equivalent.
\item $X\se Y$ if and only if $\K(X)\se\K(Y)$. Moreover, $X\co Y$ implies $\K(X)\co\K(Y)$.
\item $X\se X'=\X(\K(X)$ (the barycentric subdivision of the poset $X$).
\item $K$ and $L$ are homotopy equivalent if and only if $\X(K)$ and $\X(L)$ are weak equivalent.
\item $K\se L$ if and only if $\X(K)\se\X(L)$. Moreover, $K\co L$ implies $\X(K)\co\X(L)$.
\item $K\se K'=\K(\X(K))$ (the barycentric subdivision of $K$).
\end{enumerate}
\end{theorem}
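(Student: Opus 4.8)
The proof rests on three external inputs --- McCord's theorem that the maps $\mu_X\colon\K(X)\to X$ and $\tilde\nu_K\colon K\to\X(K)$ are weak homotopy equivalences, Whitehead's theorem that two CW-complexes with the same weak homotopy type are homotopy equivalent, and Stong's characterization of contractible finite posets as the dismantlable ones --- together with two combinatorial lemmas, one for posets and one for complexes, transporting elementary reductions between the two settings. Parts (1) and (4) are purely homotopy-theoretic and use only the first two inputs; parts (2), (3), (5), (6) concern simple homotopy type, rest on the combinatorial lemmas, and are mutually entangled, with (3) and (6) the delicate ones.

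I would dispatch (1) first. Since ``weak equivalent'' is the (transitive) relation ``having the same weak homotopy type'', if $X$ and $Y$ are weak equivalent then, composing with the McCord weak equivalences $\K(X)\to X$ and $\K(Y)\to Y$, the complexes $\K(X)$ and $\K(Y)$ have the same weak homotopy type, hence are homotopy equivalent by Whitehead. Conversely, $\K(X)\simeq\K(Y)$ together with the McCord maps forces $X$ and $Y$ to have the same weak homotopy type. Part (4) reduces to (1): if $K\simeq L$ then $\X(K)$ and $\X(L)$ are weak equivalent via $\tilde\nu$; conversely, if $\X(K)$ and $\X(L)$ are weak equivalent then (1) gives $\K(\X(K))\simeq\K(\X(L))$, i.e.\ $K'\simeq L'$, and $K\cong K'$, $L\cong L'$ conclude.

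The combinatorial core is the statement: if $x$ is a weak point of a finite poset $X$, then $\K(X)\co\K(X\smallsetminus\{x\})$. Say $x$ is a down weak point, so $\hat U_x$ is nonempty and contractible; by Stong it is dismantlable, and an induction on $|X|$ (the beat-point case being immediate, a poset with a maximum having a cone for order complex) shows that $\K(\hat U_x)$ is collapsible. The closed star of $x$ in $\K(X)$ equals $\{x\}\ast\operatorname{lk}(x)$ with $\operatorname{lk}(x)=\K(\hat U_x)\ast\K(\hat F_x)$; a join with a collapsible complex is collapsible, so $\operatorname{lk}(x)$ is collapsible, and a cone over a collapsible complex collapses onto it through collapses that delete only simplices containing the apex. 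Since every simplex of $\K(X)$ through $x$ lies in this star, those collapses are legal in $\K(X)$ and yield $\K(X)\co\K(X\smallsetminus\{x\})$; the up weak point case is dual. Iterating gives $X\co Y\Rightarrow\K(X)\co\K(Y)$, hence $X\se Y\Rightarrow\K(X)\se\K(Y)$ --- the forward halves of (2). The dual lemma, that a free face $\sigma$ of $K$ with coface $\tau$ yields $\X(K)\co\X(K\smallsetminus\{\sigma,\tau\})$, follows by noting that $\sigma$ is an up weak point of $\X(K)$ (its sole proper coface being $\tau$) and that, after deleting $\sigma$, the element $\tau$ becomes a weak point because $\partial\tau\smallsetminus\{\sigma\}$ is a collapsible complex whose face poset is therefore collapsible --- an induction on complexity feeding the dual lemma back in on strictly smaller complexes. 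Iterating this gives the forward halves of (5).

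For (3) and (6), note that $\K(\X(K))=K'$ and $\X(\K(X))=X'$ by definition, so (6) is the classical fact that a complex is simple homotopy equivalent to its barycentric subdivision, and (3) is its poset analogue $X\se\X(\K(X))$. I would derive $X\se X'$ from the non-Hausdorff mapping cylinder $B(h)$ of the order-preserving weak equivalence $h\colon X'\to X$, $c\mapsto\max c$: by the general properties of $B(-)$ from \cite{BM08b} it collapses onto $X$, and one checks directly that it is simply equivalent to $X'$ (alternatively, realize $\K(X)'$ as an iterated stellar subdivision of $\K(X)$ and push the resulting simple equivalences through $\X$). With (3) in hand the circle closes: $\K(X)\se\K(Y)$ implies $\X(\K(X))\se\X(\K(Y))$ by (5), i.e.\ $X'\se Y'$, so $X\se Y$ by (3) --- the converse half of (2) --- and symmetrically, using $K\se K'$, one gets the converse half of (5). \emph{The main obstacle} is precisely this entanglement: since the simple-homotopy statements (2), (3), (5) refer to one another, breaking the circularity requires a disciplined induction on cell/element count to make the two combinatorial lemmas genuinely self-contained, together with an argument for $X\se\X(\K(X))$ that does not tacitly assume the very equivalences one is trying to prove.
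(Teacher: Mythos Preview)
The paper does not prove Theorem~\ref{relationship}: immediately before the statement it says ``the proof can be found in \cite{Bar11a,BM08b}'' and offers no argument. So there is no in-paper proof to compare against; what you have written is a reconstruction of the arguments in those references, and it is essentially correct.

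Two places where your sketch is thin deserve comment. First, the lemma ``a cone $x*L$ over a collapsible complex $L$ collapses onto $L$ through elementary collapses deleting only simplices containing $x$'' is true and is exactly what is needed, but the clean proof is an induction on the number of simplices of $L$: mirror the first elementary collapse $(\sigma,\tau)$ of $L$ by the collapse $(x\sigma,x\tau)$ in $x*L$, observe that the extra simplices $\sigma,\tau$ (not containing $x$) do not obstruct the remaining collapses, and recurse on $L_1=L\smallsetminus\{\sigma,\tau\}$. Second, for (3) your ``one checks directly that $B(h)\se X'$'' (with $h\colon X'\to X$, $c\mapsto\max c$) is the step that actually carries the weight; it goes by removing the elements of $X$ from $B(h)$ along an increasing linear extension: when $x$ is reached, all elements of $X$ strictly below $x$ are already gone, so $\hat U_x$ in the current poset equals $(U_x^X)'$, which is contractible via the closure operator $c\mapsto c\cup\{x\}$, making $x$ a down weak point. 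With these two details filled in, the circular dependence among (2), (3), (5) that you flag as the main obstacle disappears, and the proof closes exactly as you describe.
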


The \textit{opposite} of a poset $X$ will be denoted by $X^{op}$. This is the poset with the same underlying set as $X$ but with the opposite partial order. Note that $X\se X^{op}$ since their order complexes are isomorphic. A linear extension of a poset $Y$ is a total ordering $y_1,y_2,\ldots,y_r$ of the elements of $Y$ such that, if $y_i\leq y_j$, then  $i\leq j$.

The  \textit{non-Hausdorff mapping cylinder} of an order preserving map $f:X\rightarrow Y$
between finite posets is the poset $B(f)$ 
with underlying set the disjoint union $X\sqcup Y$, and order relation
defined by keeping the original ordering in $X$ and $Y$, and setting for each $x\in X$ and  $y\in Y$,  $x\leq y$ in $B(f)$ if $f(x)\leq y$ in $Y$.
The mapping cylinder $B(f)$ has similar properties as the classical mapping cylinder.  It is not difficult to see that $B(f)\sco Y$ and therefore, to study homotopy properties of posets, one can replace any order preserving map $f:X\to Y$ by the inclusion $i:X\hookrightarrow B(f)$. Under nice properties on $f$, $B(f)$ has 
the same topological properties as $X$.

Given two finite posets $X$ and $Y$, a relation $\R\subseteq X\times Y$ will mean a relation between
their underlying sets (i.e. a subset  $\R\subseteq X\times Y$ of the Cartesian product). We write $x\R y$ if $(x,y)\in \R$. 

 \begin{definition} \label{BR} Given a relation $\R\subseteq X\times Y$ between two finite posets, we define  the \textit{cylinder of the relation} $B(\R)$
 as the following finite poset. The underlying set is the disjoint
 union $X \sqcup Y$. We keep the given ordering 
 in both $X$ and $Y$, and for every $x\in X$ and $y\in Y$
 we set $x\leq y$ in $B(\R)$ 
if there are  points  $x'\in X$ and $y'\in Y$ such that 
$x\leq x'$ in $X$, $y'\leq y$ in $Y$ and $x'\R y'$ (i.e.
we take the order relation generated by $x\leq y$ if $x\R y$). 
\end{definition}

  For $A\subseteq X$ and $B\subseteq Y$,
  we set \begin{gather*}\R(A)=\{y\in Y: x\R y \text{ for some }x\in A\},\\
 \R^{-1}(B)=\{x\in X: x\R y \text{ for some }y\in B\}.\end{gather*}

\begin{remark}
An order preserving map $f:X\to Y$  induces a relation  
$\R\subseteq X\times Y$ defined by $x\R f(x)$ for every $x\in X$. In this case, the relation cylinder 
$B(\R)$ coincides with the non-Hausdorff mapping cylinder $B(f)$. Note also that, in this case, $\R(A)=f(A)$ and $\R^{-1}(B)=f^{-1}(B)$.
\end{remark}

Let $A$ be a subposet of $X$, we denote 
\begin{gather*}\overline{A} = \{x \in X :
x \geq a \text{ for some }a \in A \}=\bigcup_{a\in A } F_a \text{ (the closure of
$A$), and }\\
\underline{A} = \{x \in X :x \leq a \text{ for some }a \in A\} =
\bigcup_{a\in A } U_a\text{ (the
open hull of $A$).}\end{gather*}

The next results relate the
homotopy properties of $B(\R)$ with those of $X$ and $Y$. As a corollary we obtain a generalization of Quillen's Theorem A. The proofs are based on the standard reduction methods of finite spaces mentioned above. 

 \begin{proposition}\label{colapsa X}Let $\R\subseteq X\times Y$ be
 a relation between finite
 posets.  If the open hull $\underline{\R^{-1}(U_y)}$  is 
 homotopically trivial
  for every $y\in Y$, then $B(\R)$ and $X$ are weak equivalent. Moreover
the inclusion  $X\hookrightarrow B(\R)$ induces a simple homotopy equivalence. In particular, the order complexes $\K(B(\R))$ and $\K(X)$ are (simple) homotopy equivalent.
 Further, if for every
  $y\in Y$,  $\underline{\R^{-1}(U_y)}$ 
 is collapsible, then 
 $\K(B(\R)) \searrow \K(X)$.
 \end{proposition}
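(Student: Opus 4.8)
The plan is to recover $X$ from $B(\R)$ by deleting the points of $Y$ one at a time, in an order dictated by a linear extension of $Y$, and to recognize each deletion as the removal of a down $\gamma$-point (or of a point with collapsible $\hat U$), so that the conclusions follow from the reduction results of \cite{Bar11a,BM08b} recorded above. Concretely, fix a linear extension $y_1,\dots,y_r$ of $Y$, put $P_0=B(\R)$ and $P_i=P_{i-1}\smallsetminus\{y_i\}$ for $1\le i\le r$; then $P_r=X$, and the inclusions $P_r\hookrightarrow\dots\hookrightarrow P_0$ compose to the inclusion $X\hookrightarrow B(\R)$.

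The one computation that matters is that, when $\hat U_{y_i}$ is computed inside $P_{i-1}$, it equals $\underline{\R^{-1}(U_{y_i})}$ (regarded as a subposet of $X$). By Definition \ref{BR}, for $x\in X$ one has $x\le y_i$ in $B(\R)$ if and only if $x\le x'$ in $X$ for some $x'\in\R^{-1}(U_{y_i})$, i.e. if and only if $x\in\underline{\R^{-1}(U_{y_i})}$; since $x$ and $y_i$ are never equal, this describes $\hat U_{y_i}^{B(\R)}\cap X$. On the other hand, the elements of $Y$ lying below $y_i$ in $B(\R)$ are precisely those below $y_i$ in $Y$ (the construction adds no new relations among elements of $Y$), and by the choice of linear extension these all carry index $<i$, hence have already been removed by the time we reach $P_{i-1}$. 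Therefore $\hat U_{y_i}^{P_{i-1}}=\hat U_{y_i}^{B(\R)}\cap P_{i-1}=\underline{\R^{-1}(U_{y_i})}$.

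By hypothesis $\underline{\R^{-1}(U_{y_i})}$ is homotopically trivial (in particular nonempty), so $y_i$ is a $\gamma$-point of $P_{i-1}$ and $P_{i-1}\gc P_i$; by the result of \cite{Bar11a} the inclusion then induces a simple homotopy equivalence $\K(P_i)\se\K(P_{i-1})$. Composing over $i=1,\dots,r$ shows that the inclusion $X\hookrightarrow B(\R)$ induces a simple homotopy equivalence $\K(X)\se\K(B(\R))$, which is in particular a homotopy equivalence, so $X$ and $B(\R)$ are weak equivalent by Theorem \ref{relationship}(1). If moreover each $\underline{\R^{-1}(U_{y_i})}$ is collapsible, then each $\hat U_{y_i}^{P_{i-1}}$ is collapsible, so $\K(P_{i-1})\co\K(P_i)$ by \cite[Remark 4.3.1]{BM08b}, and composing these collapses gives $\K(B(\R))\co\K(X)$.

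The only delicate point is the identification $\hat U_{y_i}^{P_{i-1}}=\underline{\R^{-1}(U_{y_i})}$: it uses that $\{y_1,\dots,y_{i-1}\}$ is a downset for the linear extension, so that these points exhaust $Y\cap\hat U_{y_i}$, together with the fact that in $B(\R)$ every comparison $x<y_i$ with $x\in X$ is witnessed in a single step ($x\le x'$ with $x'\R y'$ and $y'\le y_i$), so that the earlier deletions leave the $X$-part of $\hat U_{y_i}$ untouched. Once this is in place, the rest is a routine iteration of the standard dismantling lemmas.
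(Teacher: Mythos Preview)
Your argument is correct and follows exactly the same strategy as the paper's proof: remove the $y_i$ along a linear extension of $Y$, identify $\hat U_{y_i}$ in the intermediate poset with $\underline{\R^{-1}(U_{y_i})}$, and invoke the $\gamma$-point (resp.\ collapsible-$\hat U$) reduction at each step. Your write-up is simply more explicit about why the $Y$-part of $\hat U_{y_i}$ has already been deleted and why the $X$-part is unchanged; the paper compresses this into the single line $\hat U_{y_i}^{B(\R)\smallsetminus\{y_1,\dots,y_{i-1}\}}=\hat U_{y_i}^{B(\R)}\smallsetminus Y=\underline{\R^{-1}(U_{y_i}^Y)}$.
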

 
 \begin{proof} 
 Let $y_1, y_2,  \cdots, y_n$ be a linear extension 
of the poset $Y$.  We will show 
that $$B(\R)\gc B(\R)\smallsetminus\{y_1\}\gc\cdots \gc B(\R)\smallsetminus
 \{y_1, y_2, \cdots, y_n\}=X.$$
For $2\leq i \leq n$, \[\hat U_{y_{i}}^{B(\R)\smallsetminus \{y_1, y_2, 
\cdots, y_{i-1}\}   
}=\hat U_{y_i}^{B(\R)}\smallsetminus Y=\underline{\R^{-1}(U_{y_i}^Y)},\]
 which is homotopically trivial by hypothesis. 
It follows that $y_i$ is a $\gamma$-point of the poset
 $B(\R)\smallsetminus \{y_1, y_2, 
\cdots, y_{i-1}\}$.
 Therefore, the inclusion induces a simple homotopy equivalence  $B(\R)\se X$.
 
 If for every
  $y\in Y$,  $\underline{\R^{-1}(U_y)}$ 
 is collapsible, then the previous collapses induce collapses
 between 
 the associated
 simplicial complexes by \cite[Remark 4.3.1]{Bar11a}.
 \end{proof}
 
 Analogously one can prove the following.
 
 \begin{proposition}\label{colapsa Y}Let $\R\subseteq X\times Y$ be
 a relation.  If the closure $\overline{\R(F_x)}$ is
  homotopically trivial
  for every $x\in X$, then 
 $B(\R) \se Y$. In particular, $\K(B(\R))$ and $\K(Y)$ are
 simple homotopy equivalent. Moreover, if for
  every $x\in X$, 
 $\overline{\R(F_x)}$ is collapsible,
  then $\K(B(\R)) \searrow \K(Y)$.
 \end{proposition}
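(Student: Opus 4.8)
The plan is to mirror the proof of Proposition \ref{colapsa X}, interchanging the roles of $X$ and $Y$ and of the operators $U_{\bullet}$ and $F_{\bullet}$: instead of deleting the elements of $Y$ in the order of a linear extension of $Y$, I will delete the elements of $X$ in the order of a linear extension of $X^{op}$. So I fix a total ordering $x_1,x_2,\ldots,x_m$ of the underlying set of $X$ which is a linear extension of $X^{op}$ (equivalently, $x_i<_X x_j$ forces $j<i$, so that the maximal elements of $X$ are listed first), and I aim to establish
\[
B(\R)\gc B(\R)\smallsetminus\{x_1\}\gc\cdots\gc B(\R)\smallsetminus\{x_1,\ldots,x_m\}=Y .
\]

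The heart of the argument is to compute, for each $1\le i\le m$, the set $\hat F_{x_i}$ inside the intermediate poset $B(\R)\smallsetminus\{x_1,\ldots,x_{i-1}\}$. By the choice of linear extension, every element of $X$ strictly above $x_i$ has index smaller than $i$ and has therefore already been deleted, so what remains of $\hat F_{x_i}$ lies entirely in $Y$; and unwinding Definition \ref{BR} shows that a point $y\in Y$ satisfies $x_i\le y$ in $B(\R)$ precisely when $y\ge y'$ for some $y'\in\R(F_{x_i})$. Hence
\[
\hat F_{x_i}^{B(\R)\smallsetminus\{x_1,\ldots,x_{i-1}\}}=\hat F_{x_i}^{B(\R)}\smallsetminus X=\overline{\R(F_{x_i})},
\]
which is homotopically trivial by hypothesis. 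Thus $x_i$ is a $\gamma$-point of $B(\R)\smallsetminus\{x_1,\ldots,x_{i-1}\}$, the displayed chain of $\gc$-reductions is valid, and composing the simple homotopy equivalences induced by these reductions yields $B(\R)\se Y$; in particular $\K(B(\R))\se\K(Y)$ by Theorem \ref{relationship}.

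For the last assertion, suppose $\overline{\R(F_x)}$ is collapsible for every $x\in X$. Then at each step above the relevant set $\hat F_{x_i}$ is collapsible, so by \cite[Remark 4.3.1]{Bar11a} the deletion of $x_i$ induces a simplicial collapse $\K(B(\R)\smallsetminus\{x_1,\ldots,x_{i-1}\})\searrow\K(B(\R)\smallsetminus\{x_1,\ldots,x_i\})$, and concatenating these gives $\K(B(\R))\searrow\K(Y)$. I do not anticipate a real obstacle here, since $B(\R)$ is symmetric in its two arguments up to passing to opposite posets; the only delicate point is the bookkeeping in the key identification, namely that it is a linear extension of $X^{op}$ (and not of $X$) that makes the $X$-part of $\hat F_{x_i}$ vanish at exactly the right stage, and that the leftover part in $Y$ is exactly $\overline{\R(F_{x_i})}$.
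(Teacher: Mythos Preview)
Your proposal is correct and is exactly the ``analogous'' argument the paper has in mind: delete the points of $X$ one by one using a linear extension of $X^{op}$, so that at stage $i$ the remaining $\hat F_{x_i}$ lies entirely in $Y$ and equals $\overline{\R(F_{x_i})}$, making $x_i$ a $\gamma$-point (and a collapse-inducing deletion under the stronger hypothesis). The only thing to tidy is the closing paragraph, which reads as commentary on the plan rather than part of the proof; the mathematical content preceding it already establishes everything claimed.
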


As a corollary of the previous results we obtain a generalization of Quillen's Theorem A for relations $\R\subseteq X\times Y$.

\begin{theorem}\label{cilindro}Let $\R\subseteq X\times Y$ be
 a relation between finite posets.  If $\underline{\R^{-1}(U_y)}$ and $\overline{\R(F_x)}$ are
  homotopically trivial
  for every $x\in X$ and $y\in Y$, then 
 $X \se Y$. In particular,
 $\K(X)$ and $\K(Y)$ are (simple) homotopy equivalent.
\end{theorem}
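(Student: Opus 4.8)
The plan is to route everything through the cylinder $B(\R)$ and invoke the two preceding propositions, since the hypotheses of Theorem \ref{cilindro} are precisely the conjunction of the hypotheses of Proposition \ref{colapsa X} and Proposition \ref{colapsa Y} applied to the \emph{same} relation $\R$. First I would observe that the assumption that $\underline{\R^{-1}(U_y)}$ is homotopically trivial for every $y\in Y$ is exactly what Proposition \ref{colapsa X} requires, so the inclusion $X\hookrightarrow B(\R)$ induces a simple homotopy equivalence; in particular $X \se B(\R)$. Symmetrically, the assumption that $\overline{\R(F_x)}$ is homotopically trivial for every $x\in X$ is exactly the hypothesis of Proposition \ref{colapsa Y}, which gives $B(\R) \se Y$.

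Next I would combine these two facts. Since simple equivalence of posets is an equivalence relation (it is generated by collapses and expansions, and an expansion is the inverse of a collapse, so the relation is reflexive, symmetric and transitive), $X \se B(\R)$ and $B(\R) \se Y$ yield $X \se Y$. Finally, to obtain the statement about order complexes, I would apply Theorem \ref{relationship}(2), which says that $X \se Y$ if and only if $\K(X) \se \K(Y)$; this in turn gives that $\K(X)$ and $\K(Y)$ are homotopy equivalent, since simple homotopy equivalent complexes are homotopy equivalent.

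There is essentially no obstacle here: the theorem is a formal consequence of the two propositions applied to a single relation $\R$ and its cylinder $B(\R)$. The only point that deserves a sentence of care is that both propositions are legitimately applied to the one poset $B(\R)$, which depends only on $\R$ (and on $X$ and $Y$), not on any auxiliary choice; concretely, the sequence of $\gamma$-point reductions that removes the elements of $Y$ (collapsing $B(\R)$ onto $X$) and the sequence that removes the elements of $X$ (collapsing $B(\R)$ onto $Y$) are independent, so each can be performed starting from $B(\R)$. One may also remark, in line with the introduction, that when $\R$ is the relation induced by an order preserving map $f\colon X\to Y$ one has $B(\R)=B(f)$, $\R^{-1}(U_y)=f^{-1}(U_y)$ and $\R(F_x)=f(F_x)$, so that the hypotheses become the usual ``homotopically trivial fibers'' conditions and Theorem \ref{cilindro} specializes to Quillen's Theorem A for posets.
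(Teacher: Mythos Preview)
Your proposal is correct and matches the paper's approach exactly: the paper presents Theorem \ref{cilindro} as an immediate corollary of Propositions \ref{colapsa X} and \ref{colapsa Y}, combining $X \se B(\R)$ and $B(\R) \se Y$ via transitivity of $\se$, and likewise follows it with the specialization to Quillen's Theorem~A.
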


If $\R$ is the relation 
associated to an order preserving map $f:X\to Y$, then $\underline{\R^{-1}(U_y)}=f^{-1}(U_y)$ and  $\overline{\R(F_x)}=F^Y_{f(x)}$. Note that, for any $x$, $F^Y_{f(x)}$ is contractible since it has a minimum. Therefore, the hypotheses of Proposition \ref{colapsa Y} are automatically fulfilled and one can deduce Quillen's Theorem A \cite{Qui78} (cf. \cite[Thm.1.2]{Bar11b}). 

\begin{theorem}[Quillen's Theorem A for posets]
Let $f:X\to Y$ be an order preserving map between finite posets. If $f^{-1}(U_y)$
is homotopically trivial for every $y$ in $Y$, then $\K(f):\K(X)\to\K(Y)$ is a simple homotopy equivalence. 
\end{theorem}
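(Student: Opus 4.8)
The plan is to specialize the previous results to the relation $\R\subseteq X\times Y$ induced by $f$, for which $B(\R)$ is exactly the non-Hausdorff mapping cylinder $B(f)$, and then to keep track of the map $f$ as it passes through the cylinder, using that $B(f)\sco Y$.

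First I would translate the hypotheses. As already observed, $\underline{\R^{-1}(U_y)}=f^{-1}(U_y)$ (the preimage of a downset under an order preserving map is again a downset), which is homotopically trivial by assumption; hence Proposition \ref{colapsa X} applies and the inclusion $i:X\hookrightarrow B(f)$ induces a simple homotopy equivalence $\K(i):\K(X)\to\K(B(f))$. On the other side $\overline{\R(F_x)}=F^Y_{f(x)}$ has minimum $f(x)$, hence is contractible and in particular homotopically trivial, so the hypothesis of Proposition \ref{colapsa Y} holds automatically. This already yields $B(f)\se Y$, but to obtain a statement about the map $\K(f)$ itself I need the equivalence $\K(B(f))\simeq\K(Y)$ to be realized by a specific map.

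To that end I would invoke the fact recalled before Definition \ref{BR} that $B(f)\sco Y$: removing the elements of $X$ from $B(f)$ in the order of a linear extension of $X^{op}$ exhibits each of them as an up beat point, since for $x$ maximal among the remaining elements of $X$ one has $\hat F_x=F^Y_{f(x)}$, which has a minimum. Consequently $\K(B(f))\searrow\K(Y)$, and the associated retraction $r:B(f)\to Y$, characterized by $r|_Y=\mathrm{id}_Y$ and $r|_X=f$, induces a simple homotopy equivalence $\K(r):\K(B(f))\to\K(Y)$.

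Finally, since $r\circ i=f$ as maps $X\to Y$, functoriality of $\K$ gives $\K(f)=\K(r)\circ\K(i)$, a composite of two simple homotopy equivalences, hence a simple homotopy equivalence, as desired. The one point that requires a little care --- and really the only potential obstacle --- is checking that $\K(r)$ itself is a \emph{simple} homotopy equivalence: this holds because $\K(r)$ is a one-sided inverse of the inclusion $\K(Y)\hookrightarrow\K(B(f))$, which is simple since $\K(B(f))\searrow\K(Y)$, hence a homotopy inverse of it, and the composition formula for Whitehead torsion then forces $\tau(\K(r))=0$. Everything else is routine bookkeeping.
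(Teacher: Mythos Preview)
Your argument is correct and follows the same route as the paper: specialize Propositions~\ref{colapsa X} and~\ref{colapsa Y} (equivalently Theorem~\ref{cilindro}) to the relation induced by $f$, using that $\underline{\R^{-1}(U_y)}=f^{-1}(U_y)$ and $\overline{\R(F_x)}=F^Y_{f(x)}$ is automatically contractible. The paper's derivation stops there, deferring the map-level statement about $\K(f)$ to the cited reference~\cite{Bar11b}; you go a step further and actually supply that argument, factoring $\K(f)=\K(r)\circ\K(i)$ through the cylinder and checking via the torsion composition formula that the retraction $\K(r)$ is simple. This extra care is warranted, since Theorem~\ref{cilindro} alone only yields $X\se Y$ rather than a conclusion about the specific map $\K(f)$, so your version is in fact slightly more complete than what the paper writes out.
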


By relaxing the hypotheses on $\R$, we obtain weaker versions of Theorem \ref{cilindro}.

\begin{proposition}\label{relation homology}Let $\R\subseteq X\times Y$ be
 a relation between 
 posets. 
  If $\underline{\R^{-1}(U_y)}$, $\overline{\R(F_x)}$ are
 $n$-connected (resp. have trivial reduced $k$-homology groups for all $k\leq n$)
  for every $x\in X$ and $y\in Y$, then 
 $\pi_i(X)=\pi_i(Y)$ (resp. $H_i(X)=H_i(Y)$) for all 
 $0\leq i\leq n$.
\end{proposition}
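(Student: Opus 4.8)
The plan is to follow the pattern of Propositions \ref{colapsa X} and \ref{colapsa Y}, but to replace the $\gamma$-point reduction --- which requires $\hat U_x$ or $\hat F_x$ to be homotopically trivial and yields a simple homotopy equivalence --- by its truncated analogue. Concretely, I would first isolate the following lemma: if $w$ is a point of a finite poset $W$ and $\hat U_w$ is $n$-connected (resp. has trivial reduced homology in degrees $\leq n$), then the inclusion $W\smallsetminus\{w\}\hookrightarrow W$ induces isomorphisms $\pi_i(W\smallsetminus\{w\})\to\pi_i(W)$ (resp. $H_i(W\smallsetminus\{w\})\to H_i(W)$) for all $i\leq n$, and likewise with $\hat F_w$ in place of $\hat U_w$ (by passing to the opposite poset).

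For the lemma I would use the standard simplicial picture around the vertex $w$: one has $\mathrm{lk}_{\K(W)}(w)=\K(\hat U_w)*\K(\hat F_w)$, this link sits inside $\K(W\smallsetminus\{w\})$, and $\K(W)=\K(W\smallsetminus\{w\})\cup\overline{\mathrm{st}}_{\K(W)}(w)$ with the two pieces intersecting exactly in the link and with $\overline{\mathrm{st}}_{\K(W)}(w)=w*\mathrm{lk}_{\K(W)}(w)$ a cone, hence contractible. Thus $\K(W)$ is, up to homotopy, the mapping cone of $\mathrm{lk}_{\K(W)}(w)\hookrightarrow\K(W\smallsetminus\{w\})$. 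Since the join of an $n$-connected space with any space is again (at least) $n$-connected, and since $\K(\hat U_w)*\K(\hat F_w)\simeq\Sigma(\K(\hat U_w)\wedge\K(\hat F_w))$ so that its reduced homology vanishes in degrees $\leq n$ whenever that of $\K(\hat U_w)$ does (Künneth for joins), the link inherits the relevant connectivity (resp. homological triviality). The homology statement of the lemma is then immediate from the Mayer--Vietoris sequence of the decomposition above, using that $\overline{\mathrm{st}}_{\K(W)}(w)$ is contractible. For homotopy groups I would argue that $\mathrm{lk}_{\K(W)}(w)\to\overline{\mathrm{st}}_{\K(W)}(w)\simeq *$ is $(n+1)$-connected and that $\K(W\smallsetminus\{w\})\to\K(W)$ is its cobase change, hence also $(n+1)$-connected, in particular an isomorphism on $\pi_i$ for $i\leq n$. (If one prefers to avoid invoking stability of $n$-connected maps under cobase change: $\pi_0$ is trivial, for $n\geq 1$ the statement for $\pi_1$ follows from van Kampen since the link is then simply connected, and for $2\leq i\leq n$ one passes to universal covers --- legitimate as $\pi_1$ is preserved --- and combines the Mayer--Vietoris computation upstairs with the Whitehead theorem for simply connected complexes.)

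Granting the lemma, the proposition follows by iteration exactly as in Propositions \ref{colapsa X} and \ref{colapsa Y}. Taking a linear extension $y_1,\dots,y_r$ of $Y$ and deleting $y_1,y_2,\dots$ from $B(\R)$ one at a time, at the $i$-th step $\hat U_{y_i}$ computed in $B(\R)\smallsetminus\{y_1,\dots,y_{i-1}\}$ equals $\underline{\R^{-1}(U_{y_i})}$, which is $n$-connected (resp. has vanishing reduced homology up to degree $n$) by hypothesis; hence each deletion is an isomorphism on $\pi_i$ (resp. $H_i$) for $i\leq n$, and composing them gives $\pi_i(X)\cong\pi_i(B(\R))$ (resp. $H_i(X)\cong H_i(B(\R))$) for $i\leq n$. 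The dual argument --- a linear extension of $X^{op}$, deleting the elements of $X$, with $\hat F_{x_j}$ in the current poset equal to $\overline{\R(F_{x_j})}$, as in Proposition \ref{colapsa Y} --- yields $\pi_i(Y)\cong\pi_i(B(\R))$ (resp. $H_i(Y)\cong H_i(B(\R))$) for $i\leq n$. Combining the two strings of isomorphisms proves $\pi_i(X)\cong\pi_i(Y)$ (resp. $H_i(X)\cong H_i(Y)$) for all $0\leq i\leq n$.

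The main obstacle is the homotopical half of the lemma in the absence of simple connectivity: knowing only that the mapping cone $\K(W)$ has a highly connected cofiber does not by itself bound the connectivity of $\K(W\smallsetminus\{w\})\to\K(W)$, so one must either cite the standard preservation of $n$-connected maps under cobase change or handle $\pi_0$ and $\pi_1$ separately and reduce the higher homotopy groups to the simply connected case via universal covers. Everything else --- the simplicial decomposition, the connectivity and homology of the join, the Mayer--Vietoris and van Kampen steps, and the bookkeeping over the linear extensions --- is routine.
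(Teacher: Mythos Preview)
Your proposal is correct and follows essentially the same route as the paper: delete the elements of $Y$ (and then of $X$) along a linear extension of $B(\R)$, observe that at each step the relevant $\hat U$ or $\hat F$ coincides with $\underline{\R^{-1}(U_{y_i})}$ or $\overline{\R(F_{x_j})}$, and invoke the ``truncated $\gamma$-point'' lemma to conclude that each inclusion is an $(n{+}1)$-equivalence (resp.\ an isomorphism on $H_i$ for $i\le n$). The only difference is that the paper simply cites \cite[Lemmas 6.2 and 6.3]{Bar11b} for this lemma, while you reprove it from the simplicial star--link decomposition, the join connectivity bound, Mayer--Vietoris, and the preservation of $n$-connected maps under cobase change; both arguments are standard and yield the same conclusion.
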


\begin{proof}
We follow the proofs of Propositions \ref{colapsa X} and \ref{colapsa Y}.
If $\underline{\R^{-1}(U_{y_i})}$ is $n$- connected, then 
the inclusion $i:B(\R)\smallsetminus \{y_1, \cdots, y_{i}\} 
\hookrightarrow B(\R)\smallsetminus \{y_1, \cdots, y_{i-1}\}$ is 
an $(n+1)$- equivalence by  \cite[Lemma 6.2]{Bar11b}.
Then $i:X\hookrightarrow B(\R)$ is an $(n+1)$-
equivalence. Similarly, $j:Y\hookrightarrow B(\R)$ is an $(n+1)$- equivalence. 

The homology case is similar, using \cite[Lemma 6.3]{Bar11b}.
\end{proof}

\section{Generalizations of the Nerve Theorem}\label{section nerve}

Recall that the \textit{nerve} of a family $\U = \{U_i\}_{i\in I}$
of subsets of a set $X$ is the simplicial complex $\N(\U)$ whose simplices are the finite subsets $J\subseteq I$ such that 
$\displaystyle \bigcap_{i\in J} U_i \neq \varnothing$.

There exist different versions of the Nerve Theorem, 
involving open sets of topological spaces, subcomplexes of CW-complexes, etc. 
We mention here one version suitable for our purposes (see \cite{Bar11b}).

\begin{theorem}[Nerve Theorem for complexes]\label{nerve cs} Let $K$ be a finite
simplicial complex (or more generally, a regular CW-complex) and let $\U = 
\{L_i\}_{i\in I}$ be a family of subcomplexes which cover $K$ (i.e. 
$ \bigcup_{i\in J} L_i = K$).
If every intersection of elements of
$\U$ is empty or contractible, then $K$ and $\N(\U)$ have the same homotopy 
type. Moreover $K\se \N(\U)$.
\end{theorem}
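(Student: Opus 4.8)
The plan is to move to face posets and apply Theorem \ref{cilindro}; for a good cover only its specialization, Quillen's Theorem A for posets, is actually needed, but phrasing the argument through a relation makes transparent the pattern that the later (non-good) versions will follow. Since $K$ is finite we may assume $I$ is finite, so that $X:=\X(K)$ and $Y:=\X(\N(\U))^{op}$ are finite posets. Passing to the \emph{opposite} of the face poset of the nerve is the crucial device: it is what turns unions of cover members into intersections.

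For a simplex $\sigma$ of $K$ put $J(\sigma)=\{i\in I:\sigma\in L_i\}$. As $\U$ covers $K$ this set is non-empty, and since $\sigma\in\bigcap_{i\in J(\sigma)}L_i$ it is a simplex of $\N(\U)$, i.e. an element of $Y$; moreover $\sigma\subseteq\sigma'$ implies $J(\sigma)\supseteq J(\sigma')$. I would then define the relation $\R\subseteq X\times Y$ by $\sigma\,\R\,J$ iff $\sigma\in\bigcap_{i\in J}L_i$ (equivalently, iff $J\subseteq J(\sigma)$). This is the relation induced by the order preserving map $\sigma\mapsto J(\sigma)$ from $X$ to $Y$, so $B(\R)$ is the non-Hausdorff mapping cylinder of that map.

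Next I would verify the two hypotheses of Theorem \ref{cilindro}. For $J\in Y$ one computes $\underline{\R^{-1}(U_J)}=\{\sigma\in X:J\subseteq J(\sigma)\}=\X(\bigcap_{i\in J}L_i)$ (this is already a downset of $X$, hence equals its own open hull); since $J$ is a simplex of $\N(\U)$ this subcomplex of $K$ is non-empty, hence contractible by hypothesis, hence its order complex — a barycentric subdivision of a contractible complex — is contractible, so $\underline{\R^{-1}(U_J)}$ is homotopically trivial. For $\sigma\in X$ one checks that $\overline{\R(F_\sigma)}$ is the set of all faces of the simplex $J(\sigma)$ of $\N(\U)$, a finite poset with maximum element $J(\sigma)$, hence contractible and in particular homotopically trivial. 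Theorem \ref{cilindro} then yields $X\se Y$ (for the map-induced relation this step is exactly Quillen's Theorem A for posets).

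Finally I would transfer back to complexes. By Theorem \ref{relationship}, $X\se Y$ gives $\K(X)\se\K(Y)$; here $\K(X)=\K(\X(K))=K'$, and since a poset and its opposite have the same order complex, $\K(Y)=\K(\X(\N(\U))^{op})=\K(\X(\N(\U)))=\N(\U)'$. Using $K\se K'$ and $\N(\U)\se\N(\U)'$ from Theorem \ref{relationship}, transitivity of $\se$ gives $K\se\N(\U)$, which in particular yields the homotopy equivalence in the statement. The only real subtlety — and the place where a wrong choice collapses the argument — is the passage to the opposite poset in the definition of $Y$: with $\X(\N(\U))$ itself one would get $\R^{-1}(U_J)=\bigcup_{i\in J}\X(L_i)$ instead of an intersection, and the good-cover hypothesis would be useless. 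Beyond getting this and the attendant bookkeeping of the $U$'s, $F$'s, closures and open hulls right, there is no serious obstacle.
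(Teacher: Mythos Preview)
Your argument is correct. Note, however, that the paper does not actually prove Theorem~\ref{nerve cs}: it is quoted from \cite{Bar11b}, and the poset reformulation (Theorem~\ref{nerve poset}) is simply asserted to be equivalent via Theorem~\ref{relationship}. What the paper \emph{does} prove is the generalization Theorem~\ref{nerve general etf}, and your proof is exactly that argument specialized to a good cover (where $\X_0(\U)=\X(\U)$) and then transported to complexes through the $\X$/$\K$ dictionary. Your relation $\sigma\,\R\,J\iff \sigma\in\bigcap_{i\in J}L_i$ on $\X(K)\times\X(\N(\U))^{op}$ is precisely the relation $x\,\R\,J\iff x\in W_J$ used in the proof of Theorem~\ref{nerve general etf}, and your observation that $\overline{\R(F_\sigma)}$ has the extremal element $J(\sigma)$ is the content of the Remark following that theorem (where the paper points out that $I_x$ has maximum $\{j:x\in U_j\}$). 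One cosmetic point: in $Y=\X(\N(\U))^{op}$ the element $J(\sigma)$ is the \emph{minimum} of $\overline{\R(F_\sigma)}$, not the maximum; this is harmless since either gives contractibility. Your self-aware remark that ``only Quillen's Theorem~A is actually needed'' is right on target: the relation you wrote down is the graph of the order-preserving map $\sigma\mapsto J(\sigma)$, and the second hypothesis of Theorem~\ref{cilindro} is automatic for such relations.
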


The previous theorem can be restated in terms of finite posets by means of the functors $\X$ and $\K$. The equivalence between both formulations follows from Theorem \ref{relationship}. Note that covers  by subcomplexes correspond to open covers of posets. Recall that a subposet of a poset $X$ is open if it is a downset. Given an open cover $\U=\{U_i\}_{i\in I}$  of a finite poset $X$, we write $\X(\U)=\X(\N(\U))$. Note that  the elements of the poset $\X(\U)$ are the subsets $J\subseteq I$ and the partial order is given by the inclusion.

\begin{theorem}[Nerve Theorem for posets] \label{nerve poset} If $X$ is a finite poset and  $\U =
\{U_i\}_{i\in I}$ is an open cover of $X$ such that
every intersection of elements of
$\U$ is empty or homotopically trivial, then $X$ and $\X(\U)$ are weak equivalent. Moreover 
 $X \se\X(\U)$. 
 \end{theorem}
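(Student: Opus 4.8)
The plan is to deduce Theorem \ref{nerve poset} from the classical Nerve Theorem for complexes (Theorem \ref{nerve cs}) by translating back and forth through the functors $\K$ and $\X$, using the dictionary recorded in Theorem \ref{relationship}; in other words, the statement for posets is really the classical statement in disguise.

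First I would replace the open cover $\U=\{U_i\}_{i\in I}$ of the finite poset $X$ by the family $\{\K(U_i)\}_{i\in I}$ of (full) subcomplexes of the order complex $\K(X)$. The one genuinely structural point is that each $U_i$ is a downset of $X$: hence any chain $x_0<x_1<\cdots<x_k$ of $X$ whose maximum $x_k$ lies in some $U_i$ is contained entirely in that $U_i$, and since $\U$ covers $X$ as a set this shows that every simplex of $\K(X)$ belongs to some $\K(U_i)$, i.e. $\{\K(U_i)\}$ really is a cover of $\K(X)$ by subcomplexes. This is exactly the place where openness of the members of $\U$ enters, and I expect it to be the only step requiring real care.

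Next I would compute the intersections. For a nonempty $J\subseteq I$, a chain of $X$ lies in $\K(U_i)$ for every $i\in J$ precisely when it lies in $\bigcap_{i\in J}U_i$, so $\bigcap_{i\in J}\K(U_i)=\K(\bigcap_{i\in J}U_i)$. By hypothesis $\bigcap_{i\in J}U_i$ is empty or homotopically trivial; in the first case its order complex is empty, and in the second it is a contractible polyhedron (a finite poset is homotopically trivial exactly when its order complex is contractible). Thus $\{\K(U_i)\}$ is a good cover of $\K(X)$, and Theorem \ref{nerve cs} gives $\K(X)\se\N(\{\K(U_i)\})$. Moreover a nonempty finite $J\subseteq I$ is a simplex of $\N(\{\K(U_i)\})$ iff $\K(\bigcap_{i\in J}U_i)\neq\varnothing$ iff $\bigcap_{i\in J}U_i\neq\varnothing$ iff $J$ is a simplex of $\N(\U)$, so $\N(\{\K(U_i)\})=\N(\U)$ and therefore $\K(X)\se\N(\U)$.

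Finally I would push this back to posets. Applying $\X$ together with part (5) of Theorem \ref{relationship} to $\K(X)\se\N(\U)$ gives $\X(\K(X))\se\X(\N(\U))$, and $\X(\N(\U))=\X(\U)$ by definition; combining with $X\se\X(\K(X))$ from part (3) of Theorem \ref{relationship} yields $X\se\X(\U)$, which is the main claim. Since $X\se\X(\U)$ in turn implies $\K(X)\se\K(\X(\U))$, and hence that these polyhedra are homotopy equivalent, part (1) of Theorem \ref{relationship} gives that $X$ and $\X(\U)$ are weak equivalent as well. No deeper obstacle is expected here: all the topology sits in Theorem \ref{nerve cs}, which I may assume, and what remains is the translation bookkeeping through $\K$ and $\X$ — with the single delicate point being, as noted above, that $\{\K(U_i)\}$ genuinely covers $\K(X)$.
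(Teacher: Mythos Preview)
Your proposal is correct and follows exactly the route the paper indicates: the paper does not give a detailed argument for Theorem \ref{nerve poset} but simply states that ``the equivalence between both formulations follows from Theorem \ref{relationship},'' and your write-up is a careful unpacking of precisely that translation through $\K$ and $\X$. (The paper does later remark that Theorem \ref{nerve poset} is also a special case of Theorem \ref{nerve general etf}, proved via the cylinder-of-a-relation machinery, but that is offered as an alternative derivation rather than as the primary proof.)
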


Covers satisfying the hypotheses of Theorem \ref{nerve cs} or Theorem \ref{nerve poset} are called \textit{good}. We use the formulation of the Nerve Theorem in terms of posets and Theorem \ref{cilindro} to derive generalizations of the result for covers which are not necessarily good. We introduce first some notations.

Let  $\U=\{U_i\}_{i\in I}$ be an open cover of $X$. Given $J\subseteq I$ we denote by $W_J$ the intersection $\bigcap_{i\in J} U_i$. Note that $W_J$ is open in $X$ for every $J\subseteq I$. We denote by $\X_0(\U)$ the subposet of $\X(\U)$ consisting of all $J\subseteq I$ such that $W_J$ is homotopically trivial. Note that $\X_0(\U)=\X(\U)$ if and only if $\U$ is a good cover.
 If $x\in X$ we denote by $I_x$ the subposet of $\X_0(\U)$ consisting of all $J\in\X_0(\U)$ such that $x\in W_J$. Note that $I_x$ is open in $\X_0(\U)$.

\begin{theorem}\label{nerve general etf}
 Let $X$ be a finite poset and let $\U=\{U_i\}_{i\in I}$ be an open 
 cover
 of $X$. If for every $x\in X$, the subposet $\I_x$ of
$\X_0(\U)$ is homotopically trivial, then
 $X \se \X_0(\U)$. 
\end{theorem}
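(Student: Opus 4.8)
The plan is to apply Theorem~\ref{cilindro} to a carefully chosen relation between $X$ and the \emph{opposite} of the nerve poset. I would set $Y=\X_0(\U)^{op}$ and define $\R\subseteq X\times Y$ by $x\,\R\,J$ if and only if $x\in W_J$ (equivalently, $J\in\I_x$). Reversing the order on $\X_0(\U)$ is the decisive move: it is exactly what makes the two hypotheses of Theorem~\ref{cilindro} become, respectively, trivially true (``$W_J$ is homotopically trivial'', which holds by the very definition of $\X_0(\U)$) and the assumption of the theorem at hand (``$\I_x$ is homotopically trivial'').

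First I would verify the condition on preimages. For $J\in Y$ the minimal open set of $J$ in $\X_0(\U)^{op}$ is $U_J=\{J'\in\X_0(\U): J\subseteq J'\}$, and since $W_{J'}\subseteq W_J$ whenever $J\subseteq J'$, one gets $\R^{-1}(U_J)=\bigcup_{J\subseteq J',\,J'\in\X_0(\U)}W_{J'}=W_J$. As $W_J$ is a downset of $X$, it equals its own open hull, so $\underline{\R^{-1}(U_J)}=W_J$, which is homotopically trivial because $J\in\X_0(\U)$.

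Next I would verify the condition on images, closures being taken in $Y$. Each $W_J$ is a downset of $X$, so $x\le x'$ together with $x'\in W_J$ forces $x\in W_J$; hence $\R(F_x)=\{J\in\X_0(\U): x\in W_J\}=\I_x$. Now $\I_x$ is a downset of $\X_0(\U)$ (from $x\in W_J$ and $J'\subseteq J$ one gets $x\in W_{J'}$), so its closure in the \emph{opposite} poset $Y$ is $\{J'\in\X_0(\U): J'\subseteq J\text{ for some }J\in\I_x\}=\I_x$, which is homotopically trivial by hypothesis. With both conditions in place, Theorem~\ref{cilindro} gives $X\se Y=\X_0(\U)^{op}$; since a poset and its opposite have isomorphic order complexes, $\X_0(\U)^{op}\se\X_0(\U)$, and therefore $X\se\X_0(\U)$.

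The only genuine choice in this argument is the order convention on the nerve, and that is where I expect the sole difficulty to lie. If one kept the inclusion order and set $x\,\R\,J\iff x\in W_J$, then $\overline{\R(F_x)}$ would be the \emph{upward} closure of $\I_x$ in $\X_0(\U)$ and $\underline{\R^{-1}(U_J)}$ would be $\bigcup_{J'\subseteq J}W_{J'}$, neither of which is homotopically trivial in general (the closure of a contractible downset can be homotopy equivalent to a circle). Passing to $\X_0(\U)^{op}$ is precisely what forces the relevant ``fibers'' of the cylinder $B(\R)$ to be the sets $W_J$ and $\I_x$, and after that the verification is just routine manipulation of downsets and closures.
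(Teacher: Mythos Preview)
Your proof is correct and follows essentially the same route as the paper: the paper also sets $Y=\X_0(\U)^{op}$, defines $x\,\R\,J\iff x\in W_J$, checks that $\underline{\R^{-1}(U_J)}=W_J$ and $\overline{\R(F_x)}=\I_x$ (viewed in the opposite poset), and then invokes Theorem~\ref{cilindro} together with $\X_0(\U)^{op}\se\X_0(\U)$. Your explicit discussion of why the opposite order is the right convention is a nice addition, but the underlying argument is identical.
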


\begin{proof}
Consider the opposite poset $\X_0(\U)^{op}$ and the relation $\R\subseteq X\times \X_0(\U)^{op}$ defined by:  $$x~\R~ J\text{ if }x \in W_J .$$

Let $J\in\X_0(\U)^{op}$. If $J'\in U_J$, then $J\subseteq J'$ and hence $W_{J'}\subseteq W_J$. It follows that $\underline{\R^{-1}\left(U_J\right) }=
\underline{W_J}=W_J$, which is homotopically trivial by definition of $\X_0(\U)$.

On the other hand, for every $x\in X$, $\overline{\R(F_x)}=\overline{I_x^{op}}=I_x^{op}$, which is homotopically trivial by hypothesis. By Theorem \ref{cilindro}, $X\se  \X_0(\U)^{op}$. 
Since $\X_0(\U)^{op}\se \X_0(\U)$, 
we deduce that $X\se  \X_0(\U)$. 
\end{proof}

\begin{remark} 
Note that the Nerve Theorem \ref{nerve poset} is a particular case of Theorem \ref{nerve general etf}. 
If $\U=\{U_i\}_{i\in I}$ is a good open cover of a finite poset $X$ (i.e $\X(\U)=\X_0(\U)$), then
 for every $x\in X$ the subposet $\I_x$ has a maximum element, namely $J=\{j\in I,\ x\in U_j\}$. In particular every $I_x$ is homotopically trivial and Theorem \ref{nerve general etf} applies. 
\end{remark}

By relaxing the hypothesis on the subposets $I_x$, one can obtain variations of Theorem \ref{nerve general etf} using Proposition \ref{relation homology}.

Suppose now that we are given a cover such that the intersections consist of disjoint unions of contractible subcomplexes (or homotopically trivial open subposets). For example, let $K$ be the boundary of the $2$-simplex with vertices $u,v,w$. Let $L$ be the subcomplex of $K$ consisting of the edges $\{u,w\}$ and $\{v,w\}$ and let $T$ be the edge $\{u,v\}$. The cover $\U=\{L,T\}$ is not good and clearly $\N(\U)$ is not homotopy equivalent to $K$, but the intersection $L\cap T$ is the union of two contractible subcomplexes (in this case, two points). We will show that in this situation we can replace the nerve $\N(\U)$ by a regular CW-complex $\hat{\N}(\U)$, which is homotopy equivalent to $K$ (see Example \ref{completion} below). The regular CW-complex $\hat{\N}(\U)$ will be the  \textit{completion} of the nerve. We study first the problem in the context of posets and then we derive the result for complexes.

\begin{definition}\label{quasigood}
A cover $\U$ of open subposets (or subcomplexes) is 
called 
\textit{quasi-good} if every non-empty intersection of a subfamily of $\U$
has homotopically trivial connected components. 
\end{definition}

\begin{definition}
Let $X$ be a finite poset and let $\U=\{U_i\}_{i\in I}$ be an open cover of $X$. Recall that for each $J\subseteq I$, $W_J$ denotes the intersection  $\bigcap_{i\in J} U_i$. The \textit{completion} of the nerve $\hat{\X}(\U)$ is the poset whose elements are the pairs

$$\hat{\X}(U)=\{(J,C) : \ J\subseteq I \text{ with } W_j\neq \emptyset,\ C \text{ a connected component of }W_J\}.$$
The order is given by $(J,C)\leq (J',C')$ if $J\subseteq J'$ and $C'\subseteq C$.
\end{definition}

\begin{remark}
If all non-empty intersections of the cover $\U$ are connected, then $\hat{\X}(\U)=\X(\U)$. In particular, if $\U$ is a good cover, $\X(U)$ coincides with its completion.
\end{remark}

\begin{theorem}\label{quasi good cover etf}
 Let $X$ be a finite poset and let $\U$ be a quasi-good cover 
 of $X$.
Then, 
$X \se  \hat{\X}(\U)$. 
\end{theorem}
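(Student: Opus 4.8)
The plan is to reproduce the argument of Theorem~\ref{nerve general etf}, replacing $\X_0(\U)^{op}$ by the completion $\hat{\X}(\U)^{op}$ and using a relation that records membership in connected components of the intersections $W_J$. We may assume $I$ is finite (a finite poset has only finitely many open subsets), so $\hat{\X}(\U)$ is a finite poset. I would set $Y=\hat{\X}(\U)^{op}$ and define $\R\subseteq X\times Y$ by $x\ \R\ (J,C)$ if and only if $x\in C$. The goal is then to verify the two hypotheses of Theorem~\ref{cilindro} for $\R$; this gives $X\se\hat{\X}(\U)^{op}$, and then $\hat{\X}(\U)^{op}\se\hat{\X}(\U)$ (their order complexes are isomorphic) finishes the proof. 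The one fact that makes everything go through is the observation that each connected component $C$ of each non-empty $W_J$ is a \emph{downset} of $X$: indeed $W_J$ is a downset, being an intersection of downsets, and if $x\in C$ and $y\le x$ in $X$ then $y\in W_J$ and, being comparable to $x$, lies in the same component $C$. Thus each such $C$ is open in $X$ and, by the quasi-good hypothesis, homotopically trivial.

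For the first hypothesis, I would fix $(J,C)\in\hat{\X}(\U)^{op}$ and unwind the opposite order: $U_{(J,C)}$ consists of the pairs $(J',C')$ with $J\subseteq J'$ and $C'\subseteq C$. Since $x\ \R\ (J',C')$ forces $x\in C'\subseteq C$, while $(J,C)$ itself is a valid witness, one gets $\R^{-1}(U_{(J,C)})=C$, and hence $\underline{\R^{-1}(U_{(J,C)})}=C$ because $C$ is a downset of $X$; this is homotopically trivial by the above. For the second hypothesis, I would fix $x\in X$. Since every component $C$ of every $W_J$ is a downset, $C\cap F_x\neq\varnothing$ is equivalent to $x\in C$; thus $\R(F_x)=\{(J,C): x\in C\}=:D_x$, and $D_x$ is a downset of $\hat{\X}(\U)$ (if $x\in C$ and $C\subseteq C'$ then $x\in C'$), hence closed in $\hat{\X}(\U)^{op}$, so $\overline{\R(F_x)}=D_x$. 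Finally $D_x$ has a maximum in $\hat{\X}(\U)$, namely $(J_x,C_x)$ with $J_x=\{i\in I: x\in U_i\}$ and $C_x$ the component of $W_{J_x}$ containing $x$: any $(J,C)\in D_x$ satisfies $J\subseteq J_x$, and $W_{J_x}\subseteq W_J$ with $x$ in both forces $C_x\subseteq C$. A finite poset with a maximum is contractible, hence homotopically trivial, so $\overline{\R(F_x)}$ is homotopically trivial and Theorem~\ref{cilindro} applies.

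The argument carries essentially no analytic content beyond the lemma that components of the $W_J$ are downsets of $X$; the one place where care is genuinely required --- and where I expect any slip to occur --- is the bookkeeping between the two partial orders, keeping straight that $\underline{(-)}$ is the open hull computed in $X$ whereas $U_{(J,C)}$, $\overline{(-)}$ and the word ``maximum'' all refer to $\hat{\X}(\U)^{op}$ or $\hat{\X}(\U)$. Once that is pinned down, the two verifications are short order-theoretic computations, strictly parallel to the proof of Theorem~\ref{nerve general etf}.
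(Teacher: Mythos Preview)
Your proposal is correct and follows essentially the same route as the paper: the same relation $\R\subseteq X\times\hat{\X}(\U)^{op}$ given by $x\,\R\,(J,C)\iff x\in C$, the same computation $\underline{\R^{-1}(U_{(J,C)})}=C$ using that components of the $W_J$ are downsets, and the same identification of $\overline{\R(F_x)}$ with the set of pairs $(J,C)$ containing $x$, which has minimum $(J_x,C_x)$ in $\hat{\X}(\U)^{op}$ (equivalently, maximum in $\hat{\X}(\U)$). Your write-up makes the ``components are downsets'' lemma explicit where the paper leaves it as a one-line remark, and you compute $\R(F_x)$ directly rather than sandwiching $\overline{\R(F_x)}$ between two copies of $F_{(J_x,C_x)}$, but these are cosmetic differences.
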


\begin{proof}
Consider the opposite poset $\hat{\X}(\U)^{op}$ and the relation $\R\subseteq X\times \hat{\X}(\U)^{op}$ defined by:  $$x~\R~ (J,C)\text{ if }x \in C .$$

Let $(J,C)\in \hat{\X}(\U)^{op}$. If $(J',C')\in U_{(J,C)}$ then $J\subseteq J'$ and $C'\subseteq C$. It follows that $\underline{\R^{-1}\left(U_{(J,C)}\right) }=
\underline{C}=C$, which is homotopically trivial by hypothesis. Note that $C$ is open since it is a connected component of an open subposet.

On the other hand, for every $x\in X$, take $(J,C)\in \hat{\X}(\U)$ where $J\subseteq I$ is the maximum subset of $I$ such that $x\in W_J$ and $C$ is the unique component of $W_J$ containing $x$.  We will show that  $\overline{\R(F_x)}=F_{(J,C)}$ the closure of the element $(J,C)$ in $\hat{\X}(\U)^{op}$, which is contractible (since it has a minimum).
To see this, note first that $(J,C)\in \overline{\R(F_x)}$ and therefore the closure $F_{(J,C)}$ is contained in  $\overline{\R(F_x)}$. The other inclusion follows by the choice of $(J,C)$. If $y\in F_x$ then $x\leq y$. Therefore for any $(J',C')$ such that $y\in C'$ we have $(J,C)\leq (J',C')$ in $\hat{\X}(\U)^{op}$ since $C'$ is open. It follows that $\R(F_x)\subseteq F_{(J,C)}$ and therefore $\overline{\R(F_x)}\subseteq F_{(J,C)}$.

By Theorem \ref{cilindro}, $X\se  \hat{\X}(\U)^{op}$, and thus $X\se  \hat{\X}(\U)$. 
\end{proof}

From Theorem \ref{quasi good cover etf} we can derive a similar result for polyhedra. Note first that for any open cover $\U$ of a finite poset $X$, the completion $\hat{\X}(\U)$ is the face poset of a regular CW-complex (not necessarily a simplicial complex) whose  cells are simplices. To see this, note that for every $(J,C)\in \hat{\X}(\U)$ the subposet $U_{(J,C)}$ is the face poset of a simplex (of dimension $\# J-1$) since for any $J'\subseteq J$ there exists a unique component $C'\subseteq W_{J'}$ such that $C\subseteq C'$. The $n$-cells of this regular CW-complex correspond to the connected components of the $W_J$ with  $\# J=n+1$.


\begin{definition}
Let $K$ be a simplicial complex (or more generally, a regular CW-complex) and let $\U=\{L_i\}_{i\in I}$ be a cover of $K$ by subcomplexes. Consider the open cover  $\U_{\X}=\{\X(L_i)\}_{i\in I}$ of the face poset $\X(K)$. We define the \textit{completion} $\hat{\N}(\U)$ as the regular CW-complex satisfying $\X(\hat{\N}(\U))=\hat{\X}(\U_{\X})$. Concretely,  $\hat{\N}(\U)$ is the regular CW-complex whose cells (simplices) correspond to the connected components of the intersections of the elements of $\U$.
\end{definition}

\begin{corollary}\label{coro completion}
Let $K$ be a finite simplicial complex (or a regular CW-complex) and let 
$\U = \{L_i\}_{i\in I}$ be a quasi-good cover of 
$K$. Then $K\se \hat{\N}(\U)$.
\end{corollary}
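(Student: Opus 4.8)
The plan is to reduce Corollary \ref{coro completion} to the poset version, Theorem \ref{quasi good cover etf}, by passing through the face poset functor $\X$ and using the relationship theorem (Theorem \ref{relationship}). First I would observe that since $\U=\{L_i\}_{i\in I}$ covers $K$ by subcomplexes, the family $\U_{\X}=\{\X(L_i)\}_{i\in I}$ is an open cover of the finite poset $\X(K)$: each $\X(L_i)$ is a downset of $\X(K)$ (a face of a simplex of $L_i$ lies in $L_i$), and their union is all of $\X(K)$ because every simplex of $K$ lies in some $L_i$. Moreover, intersections are compatible with $\X$, i.e. $\X(\bigcap_{i\in J}L_i)=\bigcap_{i\in J}\X(L_i)=W_J$, since a simplex belongs to every $L_i$ with $i\in J$ iff it belongs to their intersection.

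Next I would verify that $\U_{\X}$ is a quasi-good cover of $\X(K)$ in the sense of Definition \ref{quasigood}. The non-empty intersections of subfamilies of $\U_{\X}$ are exactly the posets $\X\bigl(\bigcap_{i\in J}L_i\bigr)$ for those $J$ with $\bigcap_{i\in J}L_i\neq\varnothing$. A connected component of such a subcomplex intersection corresponds, under $\X$, to a connected component of the face poset; and since $\U$ is quasi-good, each connected component of $\bigcap_{i\in J}L_i$ is contractible, hence its face poset is homotopically trivial (the order complex of the face poset of a complex $T$ is the barycentric subdivision $T'$, homotopy equivalent to $T$, so $\X(T)$ is weak equivalent to $T$; contractible implies homotopically trivial). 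One subtle point worth spelling out: a connected component of the subcomplex $\bigcap_{i\in J}L_i$ and a connected component of the poset $W_J=\bigcap_{i\in J}\X(L_i)$ determine each other bijectively, because path-components of a finite poset viewed as a finite space agree with connected components of its order complex, and the latter is the barycentric subdivision of the subcomplex.

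With quasi-goodness established, Theorem \ref{quasi good cover etf} applies to $X=\X(K)$ and the cover $\U_{\X}$, giving $\X(K)\se\hat{\X}(\U_{\X})$. By the defining property of the completion $\hat{\N}(\U)$, namely $\X(\hat{\N}(\U))=\hat{\X}(\U_{\X})$, this reads $\X(K)\se\X(\hat{\N}(\U))$. Finally, by Theorem \ref{relationship}(5), $\X(K)\se\X(\hat{\N}(\U))$ implies $K\se\hat{\N}(\U)$, which is the claim. (Here one uses that $\hat{\N}(\U)$ is indeed a well-defined regular CW-complex, as established in the paragraph preceding the definition: $\hat{\X}(\U_{\X})$ is the face poset of a regular CW-complex whose cells are simplices, so $\X(\hat{\N}(\U))=\hat{\X}(\U_{\X})$ makes sense and part (5) of Theorem \ref{relationship} is applicable to regular CW-complexes.)

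I expect the main obstacle to be the bookkeeping in the second paragraph: carefully matching up the combinatorial data (subsets $J$, subcomplex intersections, their connected components) on the complex side with the corresponding data (the posets $W_J=\bigcap_{i\in J}\X(L_i)$ and their components) on the poset side, and confirming that "quasi-good" transfers cleanly across the functor $\X$. Everything else is a formal application of results already proved. In particular, once the cover $\U_{\X}$ is shown to be quasi-good, no new homotopy-theoretic input is needed; the content is entirely in Theorem \ref{quasi good cover etf} and the dictionary of Theorem \ref{relationship}.
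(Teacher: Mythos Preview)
Your proposal is correct and follows essentially the same route as the paper: pass to the face poset $\X(K)$ with the open cover $\U_{\X}$, apply Theorem~\ref{quasi good cover etf} to obtain $\X(K)\se\hat{\X}(\U_{\X})=\X(\hat{\N}(\U))$, and then transfer back to complexes via Theorem~\ref{relationship}. The only cosmetic difference is that the paper invokes parts (2) and (6) of Theorem~\ref{relationship} (passing through barycentric subdivisions $K'\se(\hat{\N}(\U))'$) rather than part (5) directly, and it omits the detailed verification that $\U_{\X}$ is quasi-good, which you spell out.
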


\begin{proof}
Consider the open cover $\U_{\X}=\{\X(L_i)\}_{i\in I}$ of the face poset $\X(K)$. By Theorem \ref{quasi good cover etf}, $\X(K)\se \hat{\X}(\U_{\X})$. Then $$K'=\K(\X(K))\se \K(\hat{\X}(\U_{\X}))=\K(\X(\hat{\N}(\U)))=(\hat{\N}(\U))'.$$ Therefore $K\se \hat{\N}(\U)$.
\end{proof}

The following simple example illustrates the difference between the nerve and its completion.

\begin{example} \label{completion}
If $K$ is the boundary of the $2$-simplex and the cover $\U=\{L,T\}$ is defined as in the paragraph preceding Definition \ref{quasigood}, the completion $\hat{\N}(\U)$ is the regular CW-complex homeomorphic to $S^1$ with two $0$-cells and two $1$-cells

\begin{figure}[H]
\includegraphics[scale=0.27]{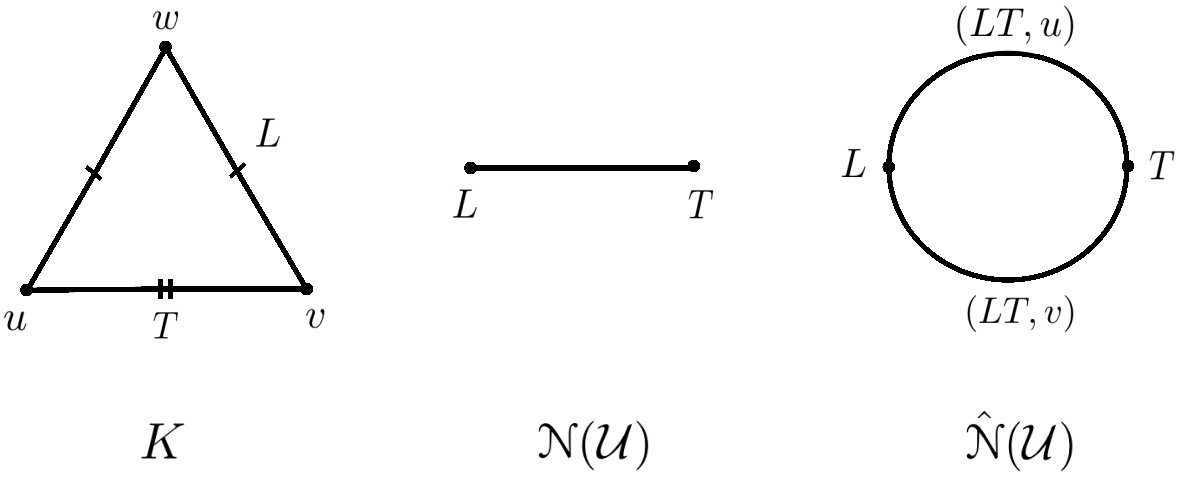}
\caption{The completion of the nerve of a quasi-good cover.}
\end{figure}
\end{example}

\begin{remark}
The completion of the nerve of a cover is related to the Mapper algorithm introduced in \cite{SMC07} for the shape recognition of data (see also 
\cite[Section 3.2]{Car09}). The Mapper algorithm uses the following construction. 
Suppose we are given a topological space $X$ and a  continuous map $f:X\to Z$
where $Z$ is a metric space (usually $\RR$ or $\RR^d$) and $\U$ is an open 
cover of $Z$ (when $Z=\RR$, $\U$ is usually taken as a family of overlapping 
open intervals). By pulling back the cover $\U$, we obtain an open cover 
$\V=f^*\U$ of $X$.  Note that the elements of $\V$ are not necessarily  path connected. 
Consider the open cover $\overline\V$ of $X$ consisting of the path connected components of the elements of $\V$ and take the nerve $\N(\overline\V)$. As it is explained in \cite[Section 3.2]{Car09}, the nerve $\N(\overline\V)$ (denoted by  $\check{C}^{\pi_0}(\V)$ in \cite{Car09}) gives a better approximation of $X$ than the usual nerve $\N(\V)$.  This construction provides a way to obtain   a cover of $X$ by means of $f:X\to Z$, even when we have incomplete information about 
 the topology of $X$. It is commonly used in data analysis, when we are only
given a finite sample of $X$  and we want to
deduce the homotopy type of $X$ from this sample.
Our construction of the completion of the nerve goes in this direction. Moreover, Corollary \ref{coro completion} suggests that a better approximation of $X$ is obtained if one takes $\hat \N(\V)$ (i.e. considering the components of all non-empty intersections). When the nerve of the cover is $1$-dimensional (for example when $Z=\RR$ and $\U$ is a family of intervals with no threefold overlaps), 
the completion is attained by considering the components of the elements of the cover and the components of the intersection of each pair of elements. In Example \ref{completion} the nerve of the cover $\overline\U$ coincides with the usual nerve $\N(\U)$ (since $L$ and $T$ are connected), but the completion $\hat{\N}(\U)$ provides a better approximation.
 \end{remark}

\bibliographystyle{alpha}

\begin{thebibliography}{9999}


 \bibitem{Bar11a} Barmak, J.A. \textit{Algebraic topology of finite topological
 spaces and applications.} Springer Lect. Notes Math. Vol. 2032 (2011). 

 \bibitem{Bar11b} Barmak, J.A. \textit{On Quillen's Theorem A for posets.}
 J. Combin. Theory Ser. A 118 (2011), 2445-2453. 


 \bibitem{BM08b} Barmak, J.A.; Minian, E.G. 
 \textit{Simple homotopy types and finite spaces.} Adv. Math. 218 (2008), no. 1, 87-104.

\bibitem{Bjo03}Bj\"orner, A.
\textit{Nerves, fibers and homotopy groups.} J. Combin. Theory, Ser. A, 102 (2003), 88-93.



\bibitem{Bor48} Borsuk, K. \textit{On the imbedding of systems of compacta
in simplicial complexes}. Fundamenta Mathematicae 35 (1948), 217-234.

\bibitem{Car09} Carlsson, G. \textit{Topology and data.} Bull. Amer. Math. Soc. 46 (2009), 255-308. 
 
\bibitem{Coh70} Cohen, M.M. \textit{A Course in Simple Homotopy Theory.} Springer-Verlag New York, Heidelberg, Berlin (1970).

\bibitem{dSG07} de Silva, V.; Ghrist, R. \textit{Coverage in sensor networks via persistent homology.}
Algebr.  Geom. Topol. 7 (2007), 339-358. 

\bibitem{DMW17} Dey, T.K.; M\'emoli, F.;  Wang, Y.\textit{ Topological Analysis of Nerves, Reeb Spaces, Mappers, and Multiscale Mappers.} SOCG 2017.

 \bibitem{DR81}Duffus, D.; Rival, I. \textit{A structure theory for ordered sets.} Discrete Math. 35 (1981), 53-118.


\bibitem{Ler45} Leray, J. \textit{Sur la forme des espaces topologiques 
et sur les points fixes des repr\'esentations.} J. Math. Pures Appl. 24 (1945) 95-167.


 \bibitem{McC66} McCord, M.C. \textit{Singular homology groups and homotopy groups of finite topological spaces}.
     Duke Math. J. 33 (1966), 465-474.

\bibitem{McC67} McCord, M.C. \textit{Homotopy type comparison of a space with complexes associated with its open covers}.
     Proc. Amer. Math. Soc. 18 (1967), 705-708.

\bibitem{NLC11} Nicolau, M.; Levine, A.J.; Carlsson, G.E. \textit{Topology based data analysis identifies a subgroup of
breast cancers with a unique mutational profile and excellent survival.}
Proc. Natl. Acad. Sci. USA 108 (2011), 7265-7270.


 \bibitem{Qui78}Quillen, D. \textit{Homotopy properties of the poset of nontrivial p-subgroups of a group.} Adv. Math.
28 (1978), 101-128.
		
\bibitem{SMC07} Singh, G.; M\'emoli, F.; Carlsson, G. \textit{Topological methods for the analysis of high dimensional data sets and 3d object recognition.}
SPBG (2007), 91-100. 

 \bibitem{Sto66} Stong, R.E. \textit{Finite topological spaces.}  Trans. Amer. Math. Soc. 123 (1966), 325-340.

   
\bibitem{Wei52} Weil, A.  \textit{Sur les theoremes de de Rham} Comment. Math. Helv. 26 (1952), 119-145. 
   
\bibitem{Whi39} Whitehead, J.H.C. 
 \textit{Simplicial spaces, nuclei and m-groups.} Proc. London Math. Soc. 
 45 (1939), 243-327.


   \end{thebibliography}

\end{document}